\def\rank{\mathop{\rm rk}\nolimits}
\def\termrank{\mathop{\rm rk_t}\nolimits}
\date{13th February 2004}
\title{On the maximum rank of Toeplitz block matrices of blocks of a given pattern}
\author{Gunther Rei\ss ig%
\thanks{%
Otto-von-Guericke-Universit\"at Magdeburg, %
Chair of Systems Theory, %
Institut f\"ur Automatisierungstechnik (FEIT-IFAT), %
PF 4120, %
D-39016 Magdeburg, %
Germany.
You may find a {Bib\TeX} entry for this paper at
http://www.reiszig.de/gunther/%
}\;
\thanks{This is the accepted version of a paper
  published in Proc. 16th Intl. Symp. on Math. Th. of Networks
   and Systems (MTNS), Leuven, Belgium, July 5-9, 2004,
   B. de Moor, B. Motmans, J. Willems, P. Van Dooren,
   and V. Blondel, editors.
   ISBN 90-5682-517-8.
\copyright Katholieke Universiteit Leuven, Departement Elektrotechniek
(ESAT), Heverlee, Belgium, 2004. Some errors have been corrected.}
}
\begin{document}
\maketitle
\begin{abstract}
We show that the maximum rank of block lower triangular Toeplitz
block matrices equals their term rank if the blocks fulfill a
structural condition, i.e., only the locations but not the values of
their nonzeros are fixed.
\end{abstract}
\section{Introduction}
\label{sec:intro}
Associated with a sequence $H$ of matrices,
\[
H=\left( H_i \right)_{i\in\mathbb{N}\cup\{0\}},
\]
or, equivalently, with a formal Laurent series $H$,
\[
H(s)=\sum_{i=0}^{\infty} s^{-i}H_{i},
\]
there are Toeplitz block matrices
\[
T_{k+1}(H)=
\left(
\arraycolsep.1em
\begin{array}{lccc}
H_0\\
H_{1} & H_0\\
\vdots & \ddots & \ddots\\
H_{k} & \cdots & H_{1} & H_0
\end{array}
\right),
k\geq0,
\]
which play a part in systems and control theory
\cite{VandewalleDewilde74,PughJonesDemianczukHayton89,MattsonSoederlind93,Wiedemann98,i02inconsistent}
and are also related to problems in other fields, e.g.
\cite{Forney75,Poljak89}.

In the structural approach to the analysis of linear systems, initiated by
Iri, Tsunekawa, Yajima \cite{IriTsunekawaYajima71} and Lin \cite{Lin74b},
one assumes that each nonzero of the matrices by which the system is
described is either a fixed zero or a free parameter
\cite{YamadaFoulds90,DionCommaultvanderWoude03}, i.e.,
the following \begriff{structural condition} is imposed:
\begin{equation}
\label{e:StrCondition}
\begin{minipage}{11cm}
Each nonzero of the matrices equals some parameter,
and none of these parameters appears more than once in the matrices.
\end{minipage}
\end{equation}
For such parameter dependent systems, one asks for \begriff{generic}
(or \begriff{structural} or \begriff{typical}) properties of systems,
that is, properties that the system has for almost all (in one sense
or another) values of the parameters
\cite{YamadaFoulds90,DionCommaultvanderWoude03}.
To see the relation to maximum ranks of Toeplitz block matrices,
assume the matrices
$H_i$ depend on some parameter $p\in\mathbb{R}^q$,
\[
H_i\colon\mathbb{R}^q\to\mathbb{R}^{n\times m}
\colon p\mapsto H_i(p) \text{\ \ for all $i\in\mathbb{N}\cup\{0\}$},
\]
so that $T_{k}(H)$ depends on $p$ as well,
\[
T_{k}(H)\colon \mathbb{R}^q\to\mathbb{R}^{nk\times mk}.
\]
If $H$ fulfills a \begriff{structural condition}, i.e., the parameter
dependent matrix
\begin{equation}
\label{e:StrConditionLaurent}
p\mapsto(H_{k-1}(p),\dots,H_{1}(p),H_0(p))
\text{\ \ \ fulfills \ref{e:StrCondition}},
\end{equation}
then the dependence of $T_{k}(H)$ on
$p$ is analytic. This implies that the generic variants of
system properties characterizable by ranks of Toeplitz block matrices
and their submatrices are
characterizable by the maxima over $p\in\mathbb{R}^q$ of the ranks of
$T_{k}(H)(p)$ and its submatrices.
The difficulty is that even though $H$ fulfills \ref{e:StrConditionLaurent},
$T_{k}(H)$ does not fulfill \ref{e:StrCondition}, so that it may be
very hard to determine maximum ranks if $n$, $m$ or $k$ is large.

It is the purpose of this paper to show that for all $k\geq 1$ the
maximum rank of $T_k(H)$ equals its term rank, i.e., that the obvious
inequality
\begin{equation}
\label{e:inequal}
\max_{p\in\mathbb{R}^q}\,\rank T_k(H)(p)
\leq
\max_{p_1,\dots,p_{K}\in\mathbb{R}^q} \rank
\left(
\arraycolsep.1em
\def\arraystretch{2}
\begin{array}{lccc}
H_0(p_1)\\
H_{1}(p_{k+1}) & H_0(p_2)\\
\vdots & \ddots & \ddots\\
H_{k-1}(p_{K}) & \cdots & H_{1}(p_{2k-1}) & H_0(p_k)
\end{array}
\right)
\end{equation}
is actually an equality, provided that $H$ fulfills the structural
condition \ref{e:StrConditionLaurent}.
This result holds for matrices $H_i$
over an arbitrary field.
(Here, $K=k(k-1)/2$ and $\rank X$ denotes the rank of $X$.
The right hand side of \ref{e:inequal} is called the
\begriff{term rank} of $T_k(H)$ and is denoted by $\termrank T_k(H)$.)

Special cases of this result have been known for some time.
The case $k=1$ is due to Edmonds \cite{Edmonds67}. According to
\cite{Poljak89}, the case $k=2$
had been solved by Holenda and Schlegel in 1987 under the assumption that
$H_{1}$ is diagonal and nonsingular. Equality has been shown in
\cite{Poljak89} for general $k$ under the assumptions that $H_{1}$ is
diagonal and nonsingular and that $H$ is a pencil, i.e.,
$H(p)(s)=H_0(p)+s^{-1}H_{1}(p)$.
Finally, equality has been shown in \cite{i98MTNS,Wiedemann98} for general
$k$ under the assumption that $H$ is a pencil.
In addition, an analogous result for one particular submatrix of
$T_{n+1}(H)$ and some particular form of $H$ has been obtained in
\cite{ShieldsPearson76}, which is not a special case of the result of
this paper.

The proof we present in section \ref{section:main} is
elementary and uses only two nontrivial facts, namely, the
K\"onig-Egerv{\'a}ry Theorem and a simple lemma from parametric
programming. Moreover, compared to the proofs of those special cases
of our result that have been obtained earlier, our proof must also be
called extremely short.
\section{Preliminaries}
\label{section:Preliminaries}
We introduce some convenient notation and collect some well-known
facts.

If $R$ and $C$ are finite sets and $\mathbb{F}$ is a field,
we call any mapping $M\colon R\times C\to\mathbb{F}$ a
\textit{matrix of size $R\times C$ over $\mathbb{F}$}.
The entry in \textit{row} $r\in R$ and \textit{column} $c\in C$ of $M$
is denoted by $M_{r,c}$.
By a \textit{formal Laurent series of size $R\times C$ over $\mathbb{F}$} we mean a
sequence $H$,
\[
H=(H_{i})_{i\in\mathbb{N}\cup\{0\}},
\]
where the coefficients $H_{i}$ of $H$ are matrices of size $R\times C$ over
$\mathbb{F}$.

$T_k(H)$ denotes the block lower triangular Toeplitz block matrix
associated with $H$, which is the matrix of size
$(\{1,\dots,k\}\times R)\times(\{1,\dots,k\}\times C)$ over $\mathbb{F}$
defined by
\[
T_k(H)_{(i,r),(j,c)}=
\begin{cases}
0,&\text{if $i<j$,}\\
(H_{i-j})_{r,c},&\text{otherwise}.
\end{cases}
\]

Let $H$ be a Laurent series of size $R\times C$ over $\mathbb{F}$ and
assume $R\cap C=\emptyset$. The bipartite graph $G(H)$ associated with
$H$ is defined by $G(H)=(R,C,E)$, where
\[
E=\{\{r,c\}\,|\,r\in R,\,c\in C,\,(H_{i})_{r,c}\not=0\text{\ for some
  $i\in\mathbb{N}\cup\{0\}$}\}.
\]
Likewise, the weight function $w\colon E\to-\mathbb{N}\cup\{0\}$
associated with $H$ is defined by
\[
w_{r,c}=-\min\{i\in\mathbb{N}\cup\{0\}\,|\,(H_i)_{r,c}\not=0\}.
\]

Let $G$ be a bipartite graph,
$G=(R,C,E)$ with $R\cap C=\emptyset$, with vertex set $R\cup C$
and edge set
$
E\subseteq\{\{r,c\}\,|\,r\in R,\,c\in C\},
$
and let
\[
A\colon (R\cup C)\times E\to\{0,1\}
\]
be its incidence matrix \cite{Recski89}.
Let further $w\colon E\to\mathbb{Z}$ be a weight function on the edge
set of $G$. In the following,
\begin{align*}
&X\colon E\to\{0,1\},\\
&y\colon R\to\mathbb{N}\cup\{0\},\\
&z\colon C\to\mathbb{N}\cup\{0\}.
\end{align*}
The bipartite cardinality matching problem $B(G)$ is to
\begin{align}
\tag{$B(G)-a$}
\text{maximize}&\sum_{\{r,c\}\in E}X_{r,c}\\
\tag{$B(G)-b$}
\text{s.t.\ }&
AX\leq 1.
\end{align}
Its dual, the bipartite covering problem $DB(G)$, is to
\begin{align}
\tag{$DB(G)-a$}
\text{minimize}&\sum_{r\in R}y_r + \sum_{c\in C}z_c\\
\tag{$DB(G)-b$}
\text{s.t.\ }&
A^T\begin{pmatrix}y\\z\end{pmatrix}\geq 1.
\end{align}
The bipartite cardinality-$\mu$ assignment problem $A(G,w,\mu)$ is to
\begin{align}
\tag{$A(G,w,\mu)-a$}
\text{maximize}&\sum_{\{r,c\}\in E}X_{r,c}w_{r,c}\\
\tag{$A(G,w,\mu)-b$}
\text{s.t.\ }&
AX\leq 1,\\
\tag{$A(G,w,\mu)-c$}
&\sum_{\{r,c\}\in E}X_{r,c}=\mu.
\end{align}
Its dual, $DA(G,w,\mu)$, is to
\begin{align}
\tag{$DA(G,w,\mu)-a$}
\text{minimize\ }&\lambda\mu+\sum_{r\in R}y_r + \sum_{c\in C}z_c\\
\tag{$DA(G,w,\mu)-b$}
\text{s.t.\ }&
A^T\begin{pmatrix}y\\z\end{pmatrix}+\lambda\geq w.
\end{align}
By the K\"onig-Egerv{\'a}ry Theorem
\cite{LovaszPlummer86},
the optimal values of
$B(G)$ and $DB(G)$ coincide, and those of $A(G,w,\mu)$ and
$DA(G,w,\mu)$ coincide as well.\\
We say that $X$ (resp., $(y,z,\lambda)$) is \begriff{admissible} for problem
(\stern), if $X$ (resp., $(y,z,\lambda)$) fulfills $(\stern-b)$ and, if
present, $(\stern-c)$. We say that $X$ (resp., $(y,z,\lambda)$) is \begriff{optimal}
for problem (\stern) if it is a solution of problem (\stern).\\
The following fact \cite{RoosTerlakyVial97}
is also useful.
\begin{lemma}
\label{lemma:SubdifferentialConditionForLambda}
Let $w$ be nonpositive, $\hat \mu$ the optimal value of $B(G)$,
$\mu\in[0,\hat\mu]\cap\mathbb{Z}$, and 
$\delta(\mu)$ the optimal value of $A(G,w,\mu)$.\\
Then, for given $\lambda\in\mathbb{Z}$,
there are $y\in\mathbb{Z}^R$ and $z\in\mathbb{Z}^C$
such that $(y,z,\lambda)$ is a solution of $DA(G,w,\mu)$ iff the
following two conditions hold:
\begin{enumerate}
\item
$\mu>0$ $\Longrightarrow$ $\lambda\leq\delta(\mu)-\delta(\mu-1)$,
\item
$\mu<\hat\mu$ $\Longrightarrow$ $\lambda\geq\delta(\mu+1)-\delta(\mu)$.
\end{enumerate}
\end{lemma}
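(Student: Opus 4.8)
The plan is to reduce Lemma~\ref{lemma:SubdifferentialConditionForLambda} to standard linear-programming duality and a one-dimensional convexity argument, exploiting the fact that $A(G,w,\mu)$ for varying $\mu$ is a parametric LP. First I would observe that, because $w$ is nonpositive and $\delta$ is the optimal value of a maximization problem, $\delta$ is a concave function on $[0,\hat\mu]\cap\mathbb{Z}$ — equivalently, the discrete second differences are nonpositive, so $\delta(\mu)-\delta(\mu-1)\geq\delta(\mu+1)-\delta(\mu)$ whenever both sides are defined. This is exactly what makes conditions~1 and~2 mutually compatible: the admissible window for $\lambda$ is the (possibly empty, but here nonempty) integer interval $[\delta(\mu+1)-\delta(\mu),\,\delta(\mu)-\delta(\mu-1)]$, with the obvious conventions at the endpoints $\mu=0$ and $\mu=\hat\mu$. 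I would establish concavity by the usual averaging/convex-combination argument on optimal solutions $X$, or simply cite it as the standard property of the value function of a parametric LP whose only varying datum is the right-hand side of one equality constraint.

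Next comes the ``if'' direction, which is the heart of the matter. Suppose $\lambda$ satisfies conditions~1 and~2. I want to produce $y,z$ with $(y,z,\lambda)$ optimal for $DA(G,w,\mu)$. Here the trick is to realize $\lambda$ as a subgradient of $-\delta$ at $\mu$: Lagrangian duality for the equality constraint $(A(G,w,\mu)-c)$ says that for a fixed multiplier $\lambda$, the relaxed problem ``maximize $\sum X_{r,c}w_{r,c}+\lambda(\mu-\sum X_{r,c})$ subject to $AX\leq1$'' has optimal value $\lambda\mu + g(\lambda)$, where $g(\lambda)=\max_{AX\leq1}\sum X_{r,c}(w_{r,c}-\lambda)$ is itself the optimal value of a bipartite cardinality \emph{assignment} problem with modified weights $w-\lambda$ and no cardinality constraint. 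By the König--Egerv\'ary Theorem applied to that modified problem, $g(\lambda)=\min\{\sum y_r+\sum z_c : A^T(y,z)\geq w-\lambda,\ y,z\geq0\}$, and an optimal integral $(y,z)$ exists. This $(y,z)$ together with $\lambda$ is automatically admissible for $DA(G,w,\mu)$, and its objective value there is $\lambda\mu+g(\lambda)$. It remains to check that this value equals $\delta(\mu)$ — i.e., that weak duality is tight for the chosen $\lambda$ — and this is precisely where conditions~1 and~2 are used: they say $\lambda$ lies in the subdifferential interval of the concave function $\delta$ at $\mu$, which by the standard subgradient/Lagrangian characterization is equivalent to $\inf_{\lambda'}(\lambda'\mu+g(\lambda'))$ being attained at $\lambda'=\lambda$, hence equal to $\delta(\mu)$ (the latter equality being König--Egerv\'ary for $A(G,w,\mu)$ itself).

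For the ``only if'' direction, suppose $(y,z,\lambda)$ solves $DA(G,w,\mu)$. Then for any $\mu'\in[0,\hat\mu]\cap\mathbb{Z}$ the same $(y,z,\lambda)$ is admissible for $DA(G,w,\mu')$ — admissibility of the dual does not involve $\mu'$ — so $\delta(\mu')\leq\lambda\mu'+\sum y_r+\sum z_c$ by weak duality, with equality at $\mu'=\mu$. Subtracting, $\delta(\mu')-\delta(\mu)\leq\lambda(\mu'-\mu)$ for all feasible $\mu'$; taking $\mu'=\mu-1$ (when $\mu>0$) gives condition~1, and $\mu'=\mu+1$ (when $\mu<\hat\mu$) gives condition~2. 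This half is essentially immediate once the right inequality is written down.

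The main obstacle I anticipate is the bookkeeping in the ``if'' direction: one must be careful that the $(y,z)$ extracted from König--Egerv\'ary for the \emph{modified, uncapacitated} assignment problem really is the object needed, that it can be taken with the required nonnegativity and integrality, and — most delicately — that the subdifferential condition on $\lambda$ genuinely forces the Lagrangian bound $\lambda\mu+g(\lambda)$ down to $\delta(\mu)$ rather than merely to something $\geq\delta(\mu)$. Making that last step rigorous amounts to proving the elementary fact that for a concave function $\delta$ on consecutive integers, $\lambda$ is a subgradient at $\mu$ iff $\mu$ minimizes $\mu'\mapsto\lambda\mu'-\delta(\mu')$; everything else is routine LP duality and the two cited results.
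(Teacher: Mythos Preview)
The paper does not actually prove Lemma~\ref{lemma:SubdifferentialConditionForLambda}; it is quoted as a known fact from parametric programming with a citation to \cite{RoosTerlakyVial97}, so there is no ``paper's proof'' to compare against. Your argument is correct and is the standard one: the ``only if'' direction is exactly the one-line weak-duality computation you give, and for the ``if'' direction the identification $g(\lambda)=\max_{\mu'}(\delta(\mu')-\lambda\mu')$ together with discrete concavity of $\delta$ turns conditions~1 and~2 into the statement that the maximum is attained at $\mu'=\mu$, whence $\lambda\mu+g(\lambda)=\delta(\mu)$ and the integral $(y,z)$ obtained from LP duality (total unimodularity of the bipartite incidence matrix) for the relaxed problem is optimal for $DA(G,w,\mu)$. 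The only place to be slightly careful is the one you already flag: conditions~1 and~2 only speak about $\mu\pm1$, and you genuinely need the concavity of $\delta$ to upgrade this to the global supporting-line inequality $\delta(\mu')-\delta(\mu)\le\lambda(\mu'-\mu)$ for all $\mu'\in[0,\hat\mu]\cap\mathbb{Z}$; once that is written out explicitly the proof is complete.
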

\section{Results}
\label{section:main}
Unless stated otherwise,
$H$ is a Laurent series of size $R\times C$ over $\mathbb{F}$.
If $R\cap C=\emptyset$,
$G$ denotes the bipartite graph associated with $H$, $G=G(H)=(R,C,E)$,
$w\colon E\to\mathbb{Z}$ denotes the weight function associated with $H$,
$X\in\{0,1\}^{R\times C}$,
$y\in\mathbb{Z}^R$, $z\in\mathbb{Z}^C$, $\lambda\in\mathbb{Z}$,
$X'\in\{0,1\}^{(\{1,\dots,k\}\times R)\times(\{1,\dots,k\}\times C)}$,
$y'\in\{0,1\}^{\{1,\dots,k\}\times R}$, and
$z'\in\{0,1\}^{\{1,\dots,k\}\times C}$.\\
We also assume $X_{r,c}=0$ if $\{r,c\}\notin E$ and
$X'_{(i,r),(j,c)}=0$ if $\{(i,r),(j,c)\}$ is not an edge of $G(T_k(H))$.

Our method is to transform primal and dual solutions of the
cardinality-$\mu$ assignment problem for $G$ and $w$ into primal and
dual solutions, respectively, of the matching problem for
$G(T_k(H))$. To this end, we define $X'$, $y'$ and $z'$ for any
$k\in\mathbb{N}$ by
\begin{subequations}\label{e:FromZOPtoB}
\begin{align}
\label{e:FromZOPtoB:a}
X'_{(i,r),(j,c)}&=
\begin{cases}
1,& \text{if $X_{r,c}=1$ and $w_{r,c}=j-i$},\\
0,& \text{otherwise,}
\end{cases}\\
\label{e:FromZOPtoB:b}
y'_{i,r}&=
\begin{cases}
1,& \text{if $i\geq 1-y_r-\lambda$},\\
0,& \text{otherwise,}
\end{cases}\\
\label{e:FromZOPtoB:c}
z'_{j,c}&=
\begin{cases}
1,& \text{if $z_c\geq j$},\\
0,& \text{otherwise,}
\end{cases}
\end{align}
\end{subequations}
where $i,j\in\{1,\dots,k\}$, $r\in R$ and $c\in C$.
\begin{proposition}
\label{proposition:FromZOPtoB}
Let $\mu\in\mathbb{N}\cup\{0\}$, $k\in\mathbb{N}$, and assume that
$X$ is admissible for $A(G,w,\mu)$,
$(y,z,\lambda)$ is admissible 
for $DA(G,w,\mu)$ and that \ref{e:FromZOPtoB} holds.\\
Then $X'$ is admissible for $B(G(T_k(H)))$ and
$(y',z')$ is admissible for $DB(G(T_k(H)))$.\\
If, in addition, the condition
\[
(\mu=0 \;\vee\; \lambda\geq-k)\;\wedge\;(\mu=|R| \;\vee\; \lambda=-k)
\]
holds and
$X$ and $(y,z,\lambda)$ are optimal, then so are $X'$ and $(y',z')$.
\end{proposition}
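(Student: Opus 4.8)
The plan is to verify the four claimed assertions in order of increasing difficulty, exploiting the explicit definitions \ref{e:FromZOPtoB}. First I would check that $X'$ is admissible for $B(G(T_k(H)))$, i.e.\ that $A'X'\le 1$ where $A'$ is the incidence matrix of $G(T_k(H))$. Unwinding the block structure, the constraint on row $(i,r)$ says $\sum_{j,c}X'_{(i,r),(j,c)}\le 1$; by \ref{e:FromZOPtoB:a} the only $(j,c)$ that can contribute is $c$ with $X_{r,c}=1$ and $j=i+w_{r,c}$, which lies in $\{1,\dots,k\}$ for at most one $c$ since $X$ already satisfies the row constraint of $A(G,w,\mu)$; a symmetric argument handles columns. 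Here the key is that in $T_k(H)$ the block $H_\ell$ sits on the $\ell$-th subdiagonal, so the edge $\{(i,r),(j,c)\}$ exists in $G(T_k(H))$ with $j-i\in\{0,\dots,k-1\}$ precisely when $(H_{i-j})_{r,c}\ne 0$, and $w_{r,c}=-\min\{\ell:(H_\ell)_{r,c}\ne0\}$ makes $j-i=w_{r,c}$ a legitimate choice of edge. I would also note $X'_{(i,r),(j,c)}=0$ whenever the relevant pair is not an edge of $G(T_k(H))$, as required by the standing convention.

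Next I would check dual admissibility of $(y',z')$, i.e.\ $y'_{i,r}+z'_{j,c}\ge 1$ for every edge $\{(i,r),(j,c)\}$ of $G(T_k(H))$. Fix such an edge; then $(H_{i-j})_{r,c}\ne 0$ with $0\le i-j\le k-1$, so $\{r,c\}\in E$ and $w_{r,c}\le -(i-j)=j-i$, hence $j-i\ge w_{r,c}$. From the dual feasibility $(y,z,\lambda)$ of $DA(G,w,\mu)$ we have $y_r+z_c+\lambda\ge w_{r,c}$. Combining, $y_r+z_c+\lambda\ge w_{r,c}\ge (i-j)$... more usefully, I'd argue by cases: if $z'_{j,c}=1$ we are done, so assume $z_c<j$, i.e.\ $z_c\le j-1$; then I must show $y'_{i,r}=1$, i.e.\ $i\ge 1-y_r-\lambda$. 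Using $y_r\ge w_{r,c}-z_c-\lambda\ge (j-i)-(j-1)-\lambda=1-i-\lambda$ (the first inequality from dual feasibility together with $j-i\ge w_{r,c}$ being the wrong direction — so instead I use $w_{r,c}\ge j-i$ is false; the correct inequality is $j-i\le -w_{r,c}$ is also not it). The right bookkeeping: from the edge we get $i-j=\ell$ with $(H_\ell)_{r,c}\ne0$ so $w_{r,c}\le-\ell=j-i$, and dual feasibility gives $y_r+z_c+\lambda\ge w_{r,c}$; this alone is too weak, so the actual mechanism must be that $w$ is nonpositive together with $\lambda\ge -k$ and $z_c\le j-1\le k-1$ force the bound. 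I would carry out this case analysis carefully; \textbf{this dual-feasibility verification is the main obstacle}, since it is where the peculiar shape of \ref{e:FromZOPtoB:b}–\ref{e:FromZOPtoB:c} and the threshold $-k$ enter, and where an off-by-one sign error is easiest to make.

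For the optimality claim, I would compare objective values. By K\"onig--Egerv\'ary it suffices to show the primal objective $\sum X'_{(i,r),(j,c)}$ equals the dual objective $\sum_{i,r}y'_{i,r}+\sum_{j,c}z'_{j,c}$. The dual sum telescopes: $\sum_{j=1}^k z'_{j,c}=\min(z_c,k)$ and $\sum_{i=1}^k y'_{i,r}=\#\{i\in\{1,\dots,k\}:i\ge 1-y_r-\lambda\}=\min\big(k,\max(0,k+y_r+\lambda)\big)$, which under the extra hypothesis $\lambda\ge-k$ (so the count is $\min(k,k+y_r+\lambda)$ when $y_r+\lambda\ge -k+1$) simplifies. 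The primal sum $\sum X'_{(i,r),(j,c)}$ counts, for each edge $\{r,c\}$ of the matching $X$, the number of $i\in\{1,\dots,k\}$ with $i+w_{r,c}\in\{1,\dots,k\}$, namely $k+w_{r,c}$ when $w_{r,c}\ge -k+1$ (and by $\lambda=-k$, all matched edges have $w_{r,c}\ge\lambda=-k$, with the boundary handled by the disjunction $\mu=|R|$). I would then combine these counts with the complementary-slackness relations between the optimal $X$ and $(y,z,\lambda)$ for $A(G,w,\mu)$, using Lemma~\ref{lemma:SubdifferentialConditionForLambda} to pin down $\lambda$ relative to $\delta(\mu)-\delta(\mu-1)$ and $\delta(\mu+1)-\delta(\mu)$; the two disjunctions in the hypothesis exactly cover the two implications of that lemma. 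Matching the two totals term by term finishes the argument; the bookkeeping is routine once the case distinctions from the previous paragraph are in place.
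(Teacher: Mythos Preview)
Your dual-admissibility argument founders on a sign error. For an edge $\{(i,r),(j,c)\}$ of $G(T_k(H))$ one has $i\ge j$ (the block matrix is \emph{lower} triangular) and $(H_{i-j})_{r,c}\ne 0$; hence $\min\{\ell:(H_\ell)_{r,c}\ne0\}\le i-j$ and therefore $w_{r,c}=-\min\{\ldots\}\ge-(i-j)=j-i$, which is the \emph{opposite} of the inequality $w_{r,c}\le j-i$ you wrote (you also flipped the range to $j-i\in\{0,\dots,k-1\}$ earlier). With the correct direction the verification is a one-liner and needs no side hypotheses at all: if $y'_{i,r}=z'_{j,c}=0$ then from \ref{e:FromZOPtoB:b}--\ref{e:FromZOPtoB:c} one has $i\le -y_r-\lambda$ and $z_c\le j-1$, so $y_r+z_c+\lambda\le j-i-1<j-i\le w_{r,c}$, contradicting admissibility of $(y,z,\lambda)$ for $DA(G,w,\mu)$. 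That is exactly the paper's argument; so the step you labelled ``the main obstacle'' is in fact the easiest part once the sign is right, and the hypothesis $\lambda\ge -k$ plays no role there.

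For optimality your plan of equating the primal and dual objectives of the $T_k$-problem via K\"onig--Egerv\'ary is the same as the paper's, but your bookkeeping is off. You invoke ``$\lambda=-k$'' where only $\lambda\ge-k$ is available, and you attach the disjunction $\mu=|R|\vee\lambda=-k$ to the primal count, whereas in the paper it enters only on the dual side. Concretely, the paper shows first that every matched edge has $w_{r,c}\ge -k$ (if $X_{r_0,c_0}=1$ with $w_{r_0,c_0}<-k$, deleting that edge yields a $(\mu-1)$-matching of larger weight than $\delta(\mu-1)$, using $\lambda\ge-k$ together with $\lambda\le\delta(\mu)-\delta(\mu-1)$), so the primal objective is exactly $\sum_{\ell\ge0}(k-\ell)\xi_{-\ell}=k\mu+\delta(\mu)$. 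For the dual objective the paper uses $\lambda\ge-k$ to get $\sum_i y'_{i,r}=k+\lambda+y_r$ and $\sum_j z'_{j,c}=z_c$, summing to $\delta(\mu)+|R|(k+\lambda)-\lambda\mu$; this equals $\delta(\mu)+k\mu$ precisely when $\mu=|R|$ or $\lambda=-k$. No separate complementary-slackness relations and no appeal to Lemma~\ref{lemma:SubdifferentialConditionForLambda} beyond the subgradient bound are needed.
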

\begin{proof}
Assume $\mu>0$ without loss.\\
It is obvious from \ref{e:FromZOPtoB:a} that $X'_{(i,r),(j,c)}\geq
0$ for all $i,j\in\{1,\dots,k\}$, $r\in R$ and $c\in C$. Further,
$X'_{(i,r),(j,c)}=1$ implies $\{r,c\}\in E$ and
$w_{r,c}=j-i$, so that $(H_{i-j})_{r,c}\not=0$ and
$\{(i,r),(j,c)\}$ is an edge of $G(T_k(H))$.\\
If $X'_{(i,r),(j,c)}=X'_{(i,r),(j',c')}=1$, then
$X_{r,c}=X_{r,c'}=1$ by \ref{e:FromZOPtoB:a}, and hence,
$c=c'$ by admissibility of $X$. Further, $j-i=w_{r,c}=w_{r,c'}=j'-i$
by \ref{e:FromZOPtoB:a}, which implies $j=j'$.
Analogously, $X'_{(i,r),(j,c)}=X'_{(i',r'),(j,c)}=1$ implies $r=r'$
and $i=i'$, and thus, $X'$ is admissible for $B(G(T_k(H)))$.\\
Likewise,
$y'_{i,r}\geq0$ and $z'_{j,c}\geq0$ for all
$i,j\in\{1,\dots,k\}$, $r\in R$ and $c\in C$
by \ref{e:FromZOPtoB:b} and \ref{e:FromZOPtoB:c}.
If $\{(i,r),(j,c)\}$ is an edge of $G(T_k(H))$
and $y'_{i,r}+z'_{j,c}=0$, then $\{r,c\}$ is an edge of $G$,
$w_{r,c}\geq j-i$, and $y'_{i,r}=z'_{j,c}=0$. Therefore,
$i\leq-y_r-\lambda$ and $z_c\leq j-1$ from \ref{e:FromZOPtoB:b} and
\ref{e:FromZOPtoB:c}, and hence, $y_r+z_c+\lambda\leq j-i-1\leq
w_{r,c}-1$. This is a contradiction, as $(y,z,\lambda)$ is admissible
for $DA(G,w,\mu)$. Thus, $(y',z')$ is admissible for $DB(G(T_k(H)))$.

Now let $X$ and $(y,z,\lambda)$ be optimal, let $\hat \mu$ be the
optimal value of $B(G)$, and for every
$\mu\in[0,\hat\mu]\cap\mathbb{Z}$,
denote by $\delta(\mu)$
the optimal value of $A(G,w,\mu)$, particularly,
\[
\delta(\mu)=\sum_{\{r,c\}\in E} w_{r,c}X_{r,c}=\lambda\mu+\sum_{r\in
  R}y_r+\sum_{c\in C}z_c.
\]
If $\xi_{-i}$ is the number of
edges $\{r,c\}\in E$ for which $X_{r,c}=1$ and $w_{r,c}=-i$, then
\begin{align}
\label{e:proof1:1}
\delta(\mu)
&=
\sum_{i=0}^{\infty} (-i)\xi_{-i},\\
\label{e:proof1:2}
\mu
&=
\sum_{i=0}^{\infty}\xi_{-i}.
\end{align}
Observe that $i>\delta(\mu-1)-\delta(\mu)$ implies $\xi_{-i}=0$, for
if $\xi_{-i}>0$, then $X_{r_0,c_0}=1$ and
$w_{r_0,c_0}=-i<\delta(\mu)-\delta(\mu-1)$
for some edge $\{r_0,c_0\}\in E$. If we define $\tilde X$ by
$\tilde X_{r_0,c_0}=0$ and $\tilde X_{r,c}=X_{r,c}$ for
$\{r,c\}\not=\{r_0,c_0\}$, then
\[
\delta(\mu-1)
\geq
\sum_{\{r,c\}\in E} \tilde
X_{r,c}w_{r,c}=\delta(\mu)-w_{r_0,c_0}>\delta(\mu-1),
\]
which is a contradiction.\\
Hence, the sums in \ref{e:proof1:1} and \ref{e:proof1:2} actually
extend over $i=0$ to $k$, so that
\begin{align}
\notag
\sum_{(i,r),(j,c)} X'_{(i,r),(j,c)}
&=
\delta(\mu)-\sum_{i=0}^k(-i)\xi_{-1}+\sum_{i=0}^k(k-i)\xi_{-i}\\
&=\delta(\mu)+k\mu.
\label{e:proof1:*}
\end{align}
It remains to show the identity
\begin{equation}
\label{e:proof1:**}
\delta(\mu)+k\mu
=
\sum_{r\in R}\sum_{i=1}^ky'_{i,r}
+
\sum_{c\in C}\sum_{j=1}^kz'_{j,c},
\end{equation}
as the optimality of both $X'$ and $(y',z')$ follows from
\ref{e:proof1:*}, \ref{e:proof1:**} and the K\"onig-Egerv{\'a}ry
Theorem.\\
First, our assumption $\lambda\geq -k$ implies $\lambda+k+y_r\geq 0$,
from which
$
\sum_{i=1}^ky'_{i,r}=\lambda+k+y_r
$
follows by \ref{e:FromZOPtoB:b}, and $\sum_{j=1}^kz'_{j,c}=z_c$
follows directly from \ref{e:FromZOPtoB:c}. Hence, the value of the
right hand side of \ref{e:proof1:**} equals
\begin{align}
\nonumber
\sum_{r\in R}(\lambda+k+y_r)
+
\sum_{c\in C}z_c
&=
|R|(k+\lambda)+\sum_{r\in R}y_r +\sum_{c\in C}z_c\\
&=
\label{proposition:FromZOPtoB:proof:lastequ}
\delta(\mu)+|R|(k+\lambda)-\lambda\mu.
\end{align}
Finally, the value \ref{proposition:FromZOPtoB:proof:lastequ}
equals that of the left hand side of  \ref{e:proof1:**},
as $|R|=\mu$ or $\lambda=-k$ by assumption.
\end{proof}
\begin{theorem}
\label{theorem:TermRankEqualsGenericRank}
Let $R$ and $C$ be finite sets and $k\in\mathbb{N}$, let
$H$ be a formal Laurent series of size $R\times C$ over a field
$\mathbb{F}$, and let the coefficients of $H$ depend on some parameter
$p\in\mathbb{F}^q$.\\
If $H$ fulfills the structural condition \ref{e:StrConditionLaurent},
then the maximum over $p\in\mathbb{F}^q$ of the rank of $T_k(H)(p)$
equals the term rank of $T_k(H)$,
\[
\max_{p\in\mathbb{F}^q}\, \rank T_k(H)(p)
=
\termrank T_k(H).
\]
Moreover, the maximum is attained for some $p\in\{0,1\}^q$ and equals
the number of nonzeros in $T_k(H)(p)$ for that value of $p$.
\end{theorem}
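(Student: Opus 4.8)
The plan is to derive the theorem from Proposition~\ref{proposition:FromZOPtoB} together with the König--Egerv\'ary Theorem, Lemma~\ref{lemma:SubdifferentialConditionForLambda}, and a routine genericity argument. First I would observe that since $H$ fulfills \ref{e:StrConditionLaurent}, the map $p\mapsto T_k(H)(p)$ is such that each nonzero entry is a distinct parameter among $p_1,\dots,p_q$; hence $\rank T_k(H)(p)$ is a lower-semicontinuous (Zariski) function of $p$ whose maximum over $\mathbb{F}^q$ is attained at a generic point and equals the largest size of a submatrix whose determinant is a nonzero polynomial in $p$. A standard combinatorial fact (the expansion of the determinant along the pattern, no cancellation being possible because each parameter occurs once) identifies this largest size with the maximum number of nonzeros that can be placed on a "partial diagonal" of $T_k(H)(p)$, i.e.\ with $\termrank T_k(H)$, which is also the optimal value of the matching problem $B(G(T_k(H)))$. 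Moreover such a generic maximizer can be taken in $\{0,1\}^q$, and for that $p$ the rank equals the number of nonzeros in $T_k(H)(p)$; I would state these as the "easy" direction and the inequality \ref{e:inequal}, and spend the bulk of the proof on the reverse inequality $\termrank T_k(H)\le\max_p\rank T_k(H)(p)$.

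For the reverse inequality it suffices to exhibit admissible $X'$ for $B(G(T_k(H)))$ and admissible $(y',z')$ for $DB(G(T_k(H)))$ with equal objective values, since by König--Egerv\'ary the optimal value of $B(G(T_k(H)))$ is $\termrank T_k(H)$; Proposition~\ref{proposition:FromZOPtoB} produces exactly such a pair from optimal $X$ and $(y,z,\lambda)$ for the assignment problem $A(G,w,\mu)$ and its dual $DA(G,w,\mu)$, provided the side condition $(\mu=0\vee\lambda\ge-k)\wedge(\mu=|R|\vee\lambda=-k)$ holds. So the real work is to choose $\mu$ and $\lambda$ appropriately. I would take $\mu=\hat\mu$, the optimal value of $B(G)$ (the maximum matching in the block pattern $G$ itself), and then use Lemma~\ref{lemma:SubdifferentialConditionForLambda}: with $w$ nonpositive and $\mu=\hat\mu$ only condition~1 is active, and one checks, using that $-w_{r,c}\le k$ for every edge (because weights of $T_k(H)$ lie between $-(k-1)$ and $0$ and $H_i$ with $i\ge k$ contributes nothing to $T_k(H)$), that $\delta(\hat\mu)-\delta(\hat\mu-1)\ge -k$, whence $\lambda=-k$ is a feasible dual multiplier and there exist $y,z$ with $(y,z,-k)$ optimal for $DA(G,w,\hat\mu)$. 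With $\lambda=-k$ both clauses of the side condition are satisfied automatically.

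Then the objective value produced by Proposition~\ref{proposition:FromZOPtoB} is, by \ref{e:proof1:*}, equal to $\delta(\hat\mu)+k\hat\mu$; I would argue that this quantity is precisely $\termrank T_k(H)$. One way: $\delta(\hat\mu)+k\hat\mu=\sum_{i=0}^{k-1}(k-i)\xi_{-i}$ counts, over an optimal weighted assignment in $G$, how many entries of $T_k(H)$ can be placed on a common partial diagonal from the single chosen matching of $G$ lifted across the $k$ block rows. More carefully, I would show directly that the maximum matching of $G(T_k(H))$ has size $\max_{0\le\mu\le\hat\mu}\bigl(\delta(\mu)+k\mu\bigr)$ and that, because $\delta$ is concave (a standard property of the optimal value of a cardinality-constrained assignment as a function of $\mu$) and $\delta(\mu)-\delta(\mu-1)\ge-k$ at $\mu=\hat\mu$, the maximum over $\mu$ is attained at $\mu=\hat\mu$; combined with the matching $X'$ and cover $(y',z')$ of equal value this pins down $\termrank T_k(H)=\delta(\hat\mu)+k\hat\mu=\max_p\rank T_k(H)(p)$. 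Finally, chasing the construction \ref{e:FromZOPtoB:a} back through the genericity argument shows the maximizing $p$ can be chosen in $\{0,1\}^q$ with the rank equal to the number of nonzeros of $T_k(H)(p)$, completing the proof.

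The main obstacle I anticipate is not the application of the proposition — that is mechanical — but verifying cleanly that the correct choice is $\mu=\hat\mu$, $\lambda=-k$ and that the resulting value $\delta(\hat\mu)+k\hat\mu$ genuinely equals the term rank of $T_k(H)$ rather than merely a lower bound; this requires the concavity of $\delta$ and a careful combinatorial identification of maximum matchings of $G(T_k(H))$ with weighted assignments in $G$, which is exactly the bookkeeping carried out around \ref{e:proof1:1}--\ref{e:proof1:*} in the proof of the proposition.
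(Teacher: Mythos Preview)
Your proposal contains two genuine gaps.

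\textbf{First}, the opening paragraph asserts that in $T_k(H)(p)$ ``each nonzero entry is a distinct parameter among $p_1,\dots,p_q$'' and then invokes the Edmonds no-cancellation argument. This is false, and it is precisely the obstacle the theorem is meant to overcome: because of the Toeplitz block structure, the same parameter appears in up to $k$ different positions of $T_k(H)$, so cancellation in the minors is \emph{a priori} possible. The inequality $\max_p\rank T_k(H)(p)\le\termrank T_k(H)$ is trivial for any parameter-dependent matrix, but the Edmonds argument you sketch does \emph{not} give equality here. Related to this, your ``for the reverse inequality it suffices to exhibit admissible $X'$ and $(y',z')$ with equal objective values'' only computes $\termrank T_k(H)$; to bound $\max_p\rank$ from below you must additionally observe that the particular matching $X'$ produced by \ref{e:FromZOPtoB:a} is Toeplitz-compatible and can therefore be realised as $T_k(H)(p)$ for an explicit $p\in\{0,1\}^q$. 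The paper does exactly this in the last paragraph of its proof.

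\textbf{Second}, and more seriously, your choice $\mu=\hat\mu$ does not work. The assertion that ``$-w_{r,c}\le k$ for every edge'' implies ``$\delta(\hat\mu)-\delta(\hat\mu-1)\ge -k$'' is wrong: the increment $\delta(\hat\mu)-\delta(\hat\mu-1)$ is the net weight change along an augmenting path, not the weight of a single edge, and can be arbitrarily negative. For a concrete counterexample take $k=2$, $R=\{r_1,r_2,r_3\}$, $C=\{c_1,c_2,c_3\}$, with $(H_0)_{r_1,c_1},(H_0)_{r_2,c_2}$ and $(H_1)_{r_1,c_2},(H_1)_{r_2,c_3},(H_1)_{r_3,c_1}$ the only nonzeros. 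Then $\hat\mu=3$, $\delta(2)=0$, $\delta(3)=-3$, so $\delta(\hat\mu)-\delta(\hat\mu-1)=-3<-k=-2$, and by Lemma~\ref{lemma:SubdifferentialConditionForLambda} no $(y,z,-k)$ is optimal for $DA(G,w,\hat\mu)$; indeed the $X'$ built from the unique $3$-matching has size $3$, while $\termrank T_2(H)=4$. The paper avoids this by not fixing $\mu$: since $\delta$ is concave, the sequence $\delta(\mu)-\delta(\mu-1)$ is non-increasing, so for $\lambda=-k$ there is always \emph{some} $\mu\in[0,\hat\mu]$ satisfying both conditions of Lemma~\ref{lemma:SubdifferentialConditionForLambda} (in the example, $\mu=2$). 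That is the $\mu$ to which Proposition~\ref{proposition:FromZOPtoB} must be applied.
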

\begin{proof}
We may assume without loss that $R$ and $C$ are nonempty
and disjoint.\\
Let $G$ and $w$ be the bipartite graph and the weight function,
respectively, associated with $H$,
$G=(R,C,E)$ and $w\colon E\to-\mathbb{N}\cup\{0\}$.\\
Let further $\hat\mu$ be the optimal value of $B(G)$, i.e., the
maximum cardinality of a matching in $G$, and for
every $\mu\in[0,\hat\mu]\cap\mathbb{Z}$,
denote by $\delta(\mu)$ the optimal value of $A(G,w,\mu)$.\\
As $\delta$ is concave, there exists some $\mu$ and a solution $(y,z,\lambda)$
of $DA(G,w,\mu)$ with $\lambda=-k$ by Lemma
\ref{lemma:SubdifferentialConditionForLambda}.
Prop. \ref{proposition:FromZOPtoB} implies that for any solution $X$
of $A(G,w,\mu)$, the matching $X'$ defined by \ref{e:FromZOPtoB:a} is a
maximum matching in $G(T_k(H))$, in other words,
$\termrank T_k(H)=\rank X'$.\\
Now choose a special value of the parameter $p$ by setting
\[
p_{i,r,c}=
\begin{cases}
1,&\text{if $w_{r,c}=-i$ and $X_{r,c}=1$},\\
0,&\text{otherwise}
\end{cases}
\]
for all parameter components $p_{i,r,c}$ associated with a nonzero
$(H_{i})_{r,c}$. Obviously, $T_k(H)(p)=X'$ for this particular $p$,
so that $\rank T_k(H)(p)=\termrank T_k(H)$.
\end{proof}
\section{Conclusions}
We have shown that the maximum rank of block lower triangular Toeplitz
block matrices equals their term rank if the blocks fulfill a
structural condition, i.e., only the locations but not the values of
their nonzeros are fixed. This result holds for matrices over an
arbitrary field.

The proof we presented is elementary and, compared to the proofs of
those special cases that have been obtained earlier, it is also
extremely short. Further results related to the structural
approach to the analysis of linear state space and descriptor systems
that we have obtained using the same techniques will be detailed
elsewhere.
\small
\bibliographystyle{unsrtabb}
\bibliography{03}

\def\ocirc#1{\ifmmode\setbox0=\hbox{$#1$}\dimen0=\ht0 \advance\dimen0
  by1pt\rlap{\hbox to\wd0{\hss\raise\dimen0
  \hbox{\hskip.2em$\scriptscriptstyle\circ$}\hss}}#1\else {\accent"17 #1}\fi}
\begin{thebibliography}{10}

\bibitem{VandewalleDewilde74}
J.~Vandewalle and P.~Dewilde.
\newblock On the determination of the order and the degree of a zero of a
  rational matrix.
\newblock {\em IEEE Trans. Automat. Control}, AC-19:608--609, 1974.

\bibitem{PughJonesDemianczukHayton89}
A.~C. Pugh, E.~R.~L. Jones, O.~Demianczuk, and G.~E. Hayton.
\newblock Infinite-frequency structure and a certain matrix {L}aurent
  expansion.
\newblock {\em Internat. J. Control}, 50(5):1793--1805, 1989.

\bibitem{MattsonSoederlind93}
S.~E. Mattson and G.~S{\"o}derlind.
\newblock Index reduction in differential--algebraic equations using dummy
  derivatives.
\newblock {\em SIAM J. Sci. Comput.}, 14:677--692, 1993.

\bibitem{Wiedemann98}
G.~Wiedemann.
\newblock {\em Strukturelle Zug{\"a}nge zur Analyse und Synthese linearer
  Regelungssysteme in Deskriptorform}.
\newblock Shaker Verlag, 1999.
\newblock ({D}issertation, Fa\-kult{\"a}t Elektrotechnik, Technische
  Uni\-ver\-sit{\"a}t Dresden, Oct. 9, 1998.).

\bibitem{i02inconsistent}
G.~Rei{\ss}ig, H.~Boche, and P.~I. Barton.
\newblock On inconsistent initial conditions for linear time-invariant
  differential-algebraic equations.
\newblock {\em IEEE Trans. Circuits Systems I Fund. Theory Appl.},
  49(11):1646--1648, Nov. 2002.

\bibitem{Forney75}
G.~D. Forney, Jr.
\newblock Minimal bases of rational vector spaces, with applications to
  multivariable linear systems.
\newblock {\em SIAM J. Control}, 13:493--520, 1975.

\bibitem{Poljak89}
S.~Poljak.
\newblock Maximum rank of powers of a matrix of a given pattern.
\newblock {\em Proc. Amer. Math. Soc.}, 106(4):1137--1144, 1989.

\bibitem{IriTsunekawaYajima71}
M.~Iri, J.~Tsunekawa, and K.~Yajima.
\newblock The graphical techniques used for a chemical process simulator
  ``{JUSE} {GIFS}''.
\newblock In C.~V. Freiman, J.~E. Griffith, and J.~L. Rosenfeld, editors, {\em
  5. IFIP Congress, Aug. 23-28, 1971, Ljubljana, Yugoslavia,}, volume~2, pages
  1150--1155. North-Holland, 1972.

\bibitem{Lin74b}
C.~T. Lin.
\newblock Structural controllability.
\newblock {\em IEEE Trans. Automat. Control}, AC-19:201--208, 1974.

\bibitem{YamadaFoulds90}
T.~Yamada and L.~R. Foulds.
\newblock A graph-theoretic approach to investigate structural and qualitative
  properties of systems: a survey.
\newblock {\em Networks}, 20(4):427--452, 1990.

\bibitem{DionCommaultvanderWoude03}
J.-M. Dion, C.~Commault, and J.~van~der Woude.
\newblock Generic properties and control of linear structured systems: A
  survey.
\newblock {\em Automatica J. IFAC}, 39(7):1125--1144, 2003.

\bibitem{Edmonds67}
J.~Edmonds.
\newblock Systems of distinct representatives and linear algebra.
\newblock {\em J. Res. Nat. Bur. Standards}, 71B(4):241--245, Oct. - Dec. 1967.

\bibitem{i98MTNS}
G.~Wiedemann, K.~J. Reinschke, and G.~Rei{\ss}ig.
\newblock Structure at infinity of structure matrix pencils --- {A} {T}oeplitz
  matrix approach.
\newblock In A.~Beghi, L.~Finesso, and G.~Picci, editors, {\em Proc. 1998 Math.
  Th. of Networks and Systems (MTNS), Padova, Italy, July 6--10}, pages
  677--679. Il Poligrafo, Padova, Italy, 1998.

\bibitem{ShieldsPearson76}
R.~W. Shields and J.~B. Pearson.
\newblock Structural controllability of multiinput linear systems.
\newblock {\em IEEE Trans. Automat. Control}, AC-21(2):203--212, 1976.

\bibitem{Recski89}
A.~Recski.
\newblock {\em Matroid Theory and its Applications in Electrical Network Theory
  and in Statics}, volume~6 of {\em Algorithms and Combinatorics}.
\newblock Springer, 1989.

\bibitem{LovaszPlummer86}
L.~Lov{\'a}sz and M.~D. Plummer.
\newblock {\em Matching Theory}.
\newblock North--Holland, 1986.

\bibitem{RoosTerlakyVial97}
C.~Roos, T.~Terlaky, and J.-P. Vial.
\newblock {\em Theory and algorithms for linear optimization}.
\newblock Wiley-Interscience Series in Discrete Mathematics and Optimization.
  John Wiley \& Sons Ltd., 1997.

\end{thebibliography}
\end{document}